\documentclass[reqno,oneside,11pt]{amsart}

\linespread{1.2}
\usepackage{geometry}
\geometry{left=26.5mm,right=26.5mm, top=32mm, bottom=32mm, marginparwidth=20mm}
\usepackage{graphicx}
\usepackage{verbatim}
\usepackage{enumitem}
\usepackage{mathrsfs}
\usepackage[all]{xy}
\usepackage{xcolor}
\colorlet{darkishRed}{red!80!black}
\colorlet{darkishBlue}{blue!60!black}
\colorlet{darkishGreen}{green!60!black}
\usepackage{hyperref}
\hypersetup{
    draft = false,
    bookmarksopen=true,
    colorlinks,
    linkcolor={red!60!black},
    citecolor={green!60!black},
    urlcolor={blue!60!black}
}
\usepackage[nameinlink, capitalise, noabbrev]{cleveref}
\crefformat{enumi}{#2#1#3}
\crefformat{equation}{#2(#1)#3}
\newtheorem{theorem}{Theorem}[section]

\newtheorem{lemma}[theorem]{Lemma}

\newtheorem{mainresult}{Theorem} 
\newtheorem{openm}[mainresult]{Open question}

\theoremstyle{definition}
\newtheorem{example}[theorem]{Example}

\theoremstyle{remark}

\newcommand{\se}{\subseteq}

\usepackage{subfig}

\title[On the edge-chromatic number of 2-complexes]{On the edge-chromatic number of 2-complexes\\\medskip --- Note ---}

\author{Jan Kurkofka${}^\clubsuit$}
\thanks{${}^\clubsuit$funded by EPSRC, grant number EP/T016221/1}
\author{Emily Nevinson${}^\diamondsuit$}
\thanks{${}^\diamondsuit$funded by EPSRC, CDT in Topological Design EP/S02297X/1}
\address{University of Birmingham, Birmingham, UK}

\keywords{Colouring; edge-colouring; 2-complex; 3-dimensional; 2-pire map, Four Colour Theorem}
\subjclass[2020]{05C10, 05C15}

\newcommand{\sm}{\setminus}

\newcommand{\Sp}{\mathbb{S}}

\newcommand{\R}{\mathbb{R}}

\begin{document}
\begin{abstract}
We propose an open question that seeks to generalise the Four Colour Theorem from two to three dimensions.
As an appetiser, we show that 12 instead of four colours are both sufficient and necessary to colour every 2-complex that embeds in a prescribed 3-manifold.
However, our example of a 2-complex that requires 12 colours is not simplicial.
\end{abstract}

\maketitle

\section*{Introduction}

Motivated by the Four Colour Theorem, we raise the following open question, which essentially seeks to generalise the Four Colour Theorem from two to three dimensions.
An (\emph{edge-})\emph{colouring} of a 2-complex $C$ assigns to every edge of $C$ a colour such that two edges $e$ and $e'$ receive different colours whenever $e$ and $e'$ share an endvertex~$v$ and the boundary of some 2-cell of~$C$ enters and leaves $v$ through $e$ and~$e'$, respectively.

\begin{openm}
Let $M$ be a 3-manifold.
What is the least integer~$k$ such that every simplicial 2-complex that embeds in $M$ is $k$-colourable?
\end{openm}

In this note, we show that the answer is `$k=12$' for every 3-manifold $M$ if `simplicial' is dropped from the question:

\begin{mainresult}\label{main}\,
\begin{enumerate}[label={\textnormal{(\arabic*)}}]
\item\label{main:sufficient} Every 2-complex that embeds in a 3-manifold is 12-colourable.
\item\label{main:necessary} There is a 2-complex that embeds in $\R^3$ and which is not 11-colourable.
\end{enumerate}
\end{mainresult}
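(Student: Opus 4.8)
The plan is to derive \ref{main:sufficient} from a sparsity estimate on a ``conflict graph'', and to derive \ref{main:necessary} from the classical solution of Heawood's empire problem in the plane.

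For \ref{main:sufficient}, let $C$ be a 2-complex embedded in a 3-manifold $M$. For each vertex $v$ of $C$, a small sphere around $v$ in $M$ cuts $C$ in a graph $L_v$, the \emph{link graph at $v$}, whose vertices are the half-edges of $C$ at $v$ (a loop at $v$ contributes both of its half-edges) and whose edges are the corners of the 2-cells of $C$ at $v$; being drawn on a 2-sphere (or a disc, if $v$ lies on $\partial M$), $L_v$ is planar. Let $G$ be the graph on vertex set $E(C)$ in which $e$ and $e'$ are adjacent precisely when they conflict; a colouring of $C$ is exactly a proper colouring of $G$, so it suffices to show $\chi(G) \le 12$. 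The key point is that each conflict witnessed at a common endvertex $v$ of $e$ and $e'$ is recorded by an edge of the simplification of $L_v$ joining a half-edge of $e$ to a half-edge of $e'$, and distinct conflicts at $v$ are recorded by distinct such edges; hence, for a nonempty finite $F \se E(C)$, writing $L_v(F)$ for the simple planar graph induced on the half-edges at $v$ that belong to edges of $F$, we obtain $|E(G[F])| \le \sum_v |E(L_v(F))|$. A simple planar graph on $n \ge 1$ vertices has at most $3n-3$ edges, and those half-edges total $2|F|$ over all $v$ (each edge of $F$ has two half-edges); so $|E(G[F])| \le 6|F| - 3 < 6|F|$. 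Hence every nonempty finite subgraph of $G$ has a vertex of degree $\le 11$, so $G$ is 11-degenerate and therefore 12-colourable (invoking de Bruijn--Erd\H{o}s if $C$ is infinite), which is \ref{main:sufficient}.

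For \ref{main:necessary}, the decisive input is the solution of Heawood's empire problem in the plane for $m=2$: there is a 2-pire map on the sphere with twelve pairwise-adjacent countries, so that 12 colours are needed. Passing to the dual map, this yields a planar triangulation $T$ on 24 vertices together with a partition of $V(T)$ into 12 pairs such that any two pairs are joined by an edge of $T$. I would then realise $T$ as the link graph of a single vertex $v$ of a 2-complex $C$: take twelve loops $e_1, \dots, e_{12}$ at $v$, let the two half-edges of $e_i$ at $v$ be the two vertices of the $i$-th pair of $T$, and attach 2-cells whose corners at $v$ are exactly the edges of $T$, arranged in the link sphere of $v$ according to the chosen embedding of $T$. (It is harmless, and convenient for keeping the boundary walks of the 2-cells consistent, to first add a few auxiliary edges at $v$ with half-edges inside faces of $T$, extending the drawing so that $L_v$ remains planar.) Near $v$ the complex $C$ is then the cone on the planar graph $L_v \supseteq T$ drawn in a small sphere, and the loops and 2-cells can be completed without self-intersection in the complement of that ball, so $C$ embeds in $\R^3$. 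Since $T$ joins a half-edge of $e_i$ to a half-edge of $e_j$ for all $i \ne j$, the edges $e_i$ and $e_j$ conflict at $v$; hence the conflict graph of $C$ contains $K_{12}$, and $C$ is not 11-colourable. (Loops are unavoidable: running the argument of \ref{main:sufficient} at the single vertex $v$ forces all twelve pairwise-conflicting edges to be loops, which is exactly why the example is not simplicial.)

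The only genuinely non-trivial ingredient is the empire-problem input used for \ref{main:necessary}; given it, \ref{main:sufficient} is routine sparsity bookkeeping. I expect the main technical nuisance to be the verification in \ref{main:necessary} that the prescribed planar link graph really is realised by an \emph{embedded} 2-complex with loop edges --- that is, choosing the 2-cells, as closed edge-walks on the loops, so that their corners at $v$ reconstruct $T$ with the correct rotation system (this requires the half-edge degrees within each pair of $L_v$ to match up, which is what the auxiliary edges arrange). This is bookkeeping rather than a conceptual obstacle, and it is precisely the step that forces the construction to be non-simplicial.
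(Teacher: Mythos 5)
Your proof of \ref{main:sufficient} is correct, but it takes a different route from the paper: instead of viewing the whole link graph as a 2-pire map and quoting Heawood's theorem that every 2-pire map is 12-pair-colourable (\cref{Heawood2pire}, combined with \cref{FromComplexToPairedLink}), you reprove that bound in this setting by an 11-degeneracy count on the conflict graph, using planarity of the vertex links and the bound $3n-3$ on edges of simple planar graphs. This is essentially Heawood's empire argument specialised to link graphs; it buys a self-contained proof (modulo the planarity of links of an embedded complex, which the paper also takes as read in \cref{MbeddableGives2pire}), while the paper's version buys brevity by outsourcing the counting to~\cite{heawood}.

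For \ref{main:necessary} there is a genuine gap, and it sits exactly where the paper does its real work: the sentence claiming that ``the loops and 2-cells can be completed without self-intersection in the complement of that ball, so $C$ embeds in $\R^3$'' is not a valid principle. Embeddability of a 2-complex in $\R^3$ depends on the attaching walks themselves, not just on the (planar) link graph together with its rotation system at $v$. A minimal counterexample: take one vertex, one loop $e$, and one 2-cell attached along the closed walk that traverses $e$ twice in the same direction. Its link graph is two parallel edges between the two third-edges of $e$ --- planar, degree-faithful, and near $v$ the complex is indeed a cone on this planar graph inside a small ball --- yet the complex is the real projective plane, which does not embed in $\R^3$ (traversing $e$ three times gives a similarly non-embeddable example). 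So after choosing closed walks on your twelve loops whose corners realise $T$, you still owe an argument that the resulting discs can be embedded, and your ``cone plus completion'' justification would prove too much. This is precisely why the paper proceeds in two steps: it first embeds a \emph{punctured} 2-complex (annuli glued along the walks) in $\R^3$, which really does only require planarity plus degree-faithfulness (\cref{2pireToPunctured}), and then replaces each annulus by a genuine disc attached along the doubled-back walk $W_iU_iU_i^{-}W_i^{-}$, which can be embedded in close proximity to the annulus; the link graph only gains parallel edges and loops, so the spanning copy of the 12-chromatic 2-pire map survives and the lower bound is preserved. Your auxiliary-edge remark correctly identifies the degree-matching (degree-faithfulness) issue, but that is the minor part; the missing idea is a mechanism, such as the punctured-complex-plus-replacement trick, that guarantees the chosen attaching walks yield an embeddable complex.
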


This note is part of a project that aims to extend planar graph theory to three dimensions.
Previously, the following results have been extended:

\begin{tabular}{l l c c}
& & \emph{2D} & \emph{3D}\\
$\bullet$ & Kuratowski's Theorem & \cite{Kura,diestel2016graph} & \cite{JCseries}\\
$\bullet$ & The excluded-minors characterisation of outerplanar graphs & folklore & \cite{OS}\\
$\bullet$ & Heawood’s Theorem on colourings of plane triangulations & \cite{evenhw} & \cite{HW,Joswig,tim}\\
$\bullet$ & Whitney's Theorem on unique drawings of 3-connected graphs & \cite{WhitneyUnique,diestel2016graph} & \cite{WhitneyThree}
\end{tabular}

\section{Terminology}

We use the terminology of~\cite{diestel2016graph}.
In this note, graphs may have loops and parallel edges.

\subsection{1-complexes}
Let $G$ be a graph with vertex-set $V$ and edge-set~$E$.
We can obtain a topological space from~$G$, called the \emph{1-complex} of~$G$ and also denoted by~$G$, as follows.
The \emph{0-skeleton} of $G$ is $V$ equipped with the discrete topology.
For every edge $e\in E$, let $[0,1]_e$ be a copy of the unit interval, disjoint from $V$ and from all other copies $[0,1]_{e'}$.
Furthermore, arbitrarily fix a map $\varphi_e\colon\{0_e,1_e\}\to V$ such that the image of $\varphi_e$ is equal to the set of ends of~$e$ (so there are two choices for $\varphi_e$ if $e$ is not a loop, and only one choice if $e$ is a loop).
The \emph{1-complex} of $G$ is obtained from the 0-skeleton of~$G$ by adding all copies $[0,1]_e$ for all edges $e\in E$ and identifying $0_e$ and $1_e$ with their images under~$\varphi_e$.
Note that taking the quotient as above also defines a topology on the 1-complex.
For convenience, we now change the notation $[0,1]_e$ to refer to $[0,1]_e$ after taking the quotient as above, so that we have~$[0,1]_e\se G$.
We then call $[0,1]_e$ a \emph{topological edge} of the 1-complex~$G$, and write $e$ for $[0,1]_e$ when there is no danger of confusion.
The \emph{third-edges} of~$G$ are the closed intervals $[0,\frac{1}{3}]_e$ and $[\frac{2}{3},1]_e$ of the topological edges~$[0,1]_e$, where $e$ ranges over all edges of the graph~$G$.

\subsection{2-complexes}

A \emph{2-complex} $C$ is a topological space obtained from a 1-complex~$G$ by disjointly adding closed 2-dimensional discs $D_i$ ($i\in I$), fixing a continuous \emph{gluing map} $\varphi_i\colon\partial D_i\to G$ for each~$i$, and identifying $x$ with $\varphi_i(x)$ for all $i$ and $x\in\partial D_i$.
In this note, we will only need to consider 2-complexes whose gluing maps $\varphi_i$ follow closed walks in~$G$ at constant nonzero speed.
This will allow us to also view the gluing maps $\varphi_i$ from a combinatorial perspective, through the closed walks they \emph{correspond} to.
The subspaces of~$C$ obtained from the discs $D_i$ by gluing their boundaries to the 1-skeleton are the \emph{2-cells} of~$C$.
The vertices and edges of~$C$ are the vertices and edges of its 1-skeleton.
A 2-complex $C$ is said to be \emph{simplicial} if $G$ is simple and each gluing map follows a closed walk that goes once around a triangle.

1-complexes and 2-complexes are instances of the more general cell complexes, see~\cite{Hatcher}.

\subsection{Link graphs}
Let $C$ be a 2-complex with 1-skeleton~$G$ and gluing maps~$\varphi_i$ ($i\in I$) for its 2-cells.
The \emph{link graph} of~$C$, which we denote by~$L(C)$, is defined as follows.
The vertices of~$L(C)$ are the third-edges of~$G$.
For each~$i$, we follow $\varphi_i$ along the circle that is its domain (the direction does not matter), and we add an edge between two vertices $I$ and $J$ in $L(C)$ whenever $I$ and $J$ share a vertex $v$ in~$G$ and $\varphi_i$ first traverses $I$ to reach $v$ and then traverses~$J$ next (or vice versa).
Hence the link graph $L(C)$ may contain parallel edges and loops, even if $C$ only has one 2-cell.

A \emph{pairing} of a set~$S$ is partition of~$S$ into classes of size two.
A \emph{paired graph} is a pair of a graph $G$ and a pairing $\pi$ of its vertex set.
Every link graph has a \emph{default pairing} in which every two third-edges that are included in the same topological edge form a class.
When we view a link graph as a paired graph, we always use the default pairing.

\subsection{Colourings of paired graphs and 2-complexes}

A \emph{pair-colouring} of a paired graph $(G,\pi)$ is a colouring $c$ of the pairs in~$\pi$ such that $c(p)\neq c(q)$ whenever $G$ contains an edge joining a vertex in $p$ to a vertex in~$q$.
The terms \emph{pair-chromatic number} and \emph{$k$-pair-colourable} are defined as expected.

An (\emph{edge-})\emph{colouring} of a 2-complex~$C$ is a colouring $c$ of the edges $e$ of~$C$ such that $c$ induces a pair-colouring of the link graph of~$C$ (which colours every vertex $I\se [0,1]_e$ of $L(C)$ with the colour~$c(e)$).
The terms \emph{edge-chromatic number} and \emph{$k$-edge-colourable} are also defined as expected.

\section{Proof of \texorpdfstring{\ref{main:sufficient}}{(1)}}

A \emph{2-pire map} is a paired graph $(G,\pi)$ where $G$ is planar.
We sometimes call $G$ a 2-pire map when $\pi$ is clear from context, or say that $G$ is a 2-pire map with pairing~$\pi$.
Isomorphisms between 2-pire maps are required to respect their pairings.
The name and definition are directly motivated by Heawood's \emph{$m$-pire problem}~\cite{heawood}, which was surveyed in~\cite{mgpire}.

\begin{example}\label{MbeddableGives2pire}
The link graphs of 2-complexes that embed in 3-manifolds, equipped with their default pairings, are examples of 2-pire maps.
\end{example}

\begin{lemma}[Heawood~\cite{heawood}; surveyed in~\cite{mgpire}]\label{Heawood2pire}
Every 2-pire map is 12-pair-colourable.
\end{lemma}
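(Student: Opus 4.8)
The plan is to reduce the pair-colouring problem to an ordinary vertex-colouring problem on an auxiliary simple graph, and then to run the classical Euler-formula counting argument underlying Heawood's $m$-pire bound (here $m=2$). Concretely, given a 2-pire map $(G,\pi)$ with $G$ planar, I would introduce the \emph{pair graph} $H=H(G,\pi)$ whose vertex set is $\pi$ and in which two distinct classes $p,q$ are adjacent exactly when $G$ has an edge with one endvertex in $p$ and the other in $q$. Then $H$ is simple, a proper vertex-colouring of $H$ is literally the same thing as a pair-colouring of $(G,\pi)$, and loops, parallel edges, and within-class edges of $G$ are irrelevant to $H$. Moreover distinct edges of $H$ are witnessed by distinct edges of the simple graph underlying $G$, because a $G$-edge between $p$ and $q$ can witness only the edge $pq$ of $H$; hence $|E(H)|$ is at most the number of edges of that underlying simple planar graph.

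It then suffices to show $\chi(H)\le 12$, which I would prove by induction on the number $n=|\pi|$ of classes. The case $n\le 1$ is trivial. For $n\ge 2$ the graph $G$ has $2n\ge 4$ vertices, so its underlying simple planar graph has at most $3\cdot 2n-6=6n-6$ edges by Euler's formula; hence $|E(H)|\le 6n-6$, and therefore $\sum_{p\in V(H)}\deg_H(p)=2|E(H)|\le 12n-12<12n$, so some class $p$ has $\deg_H(p)\le 11$. Deleting the two vertices of $p$ from $G$ yields a 2-pire map whose pair graph is $H-p$; by the induction hypothesis $H-p$ is $12$-colourable, and since $p$ has at most $11$ neighbours in $H$ the colouring extends to $p$. (Equivalently: the family of pair graphs of 2-pire maps is closed under deleting a class, and each member is $11$-degenerate, so each is $12$-colourable.)

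I do not expect any real obstacle here: this is precisely Heawood's original observation for $m=2$, and the matching lower bound --- the genuinely difficult direction of the $m$-pire theorem --- is not needed for this lemma. The only points requiring care are bookkeeping ones: that the passage to $H$ is \emph{hereditary}, i.e.\ survives deleting classes, so that the edge count may be re-applied at each step of the induction; that loops and parallel edges of $G$ may be discarded without affecting $H$; and the degenerate base case $n\le 1$.
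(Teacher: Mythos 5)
Your argument is correct, but note that the paper itself contains no proof of \cref{Heawood2pire}: the lemma is imported as a black box from Heawood~\cite{heawood} (see the survey~\cite{mgpire}), which is exactly why it carries those citations. What you have written out is, in substance, Heawood's original upper-bound argument for the $m$-pire problem specialised to $m=2$: pass to the simple graph $H$ on the $n$ classes, bound $|E(H)|\le 3\cdot 2n-6$ via Euler's formula for the underlying simple planar graph on the $2n$ vertices of $G$ (your injection from $E(H)$ into its edge set is fine, since a $G$-edge between two classes witnesses only one $H$-edge), observe that the family of pair graphs is closed under deleting a class, and conclude $11$-degeneracy, hence greedy $12$-colourability; the base case and the induction step both check out. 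So your route does not differ mathematically from the cited source, but it does make the note self-contained, and you are right that the hard direction of the $m$-pire theorem (the lower bound) is not needed here --- the paper only invokes that separately, in \cref{2pireMinimum}. One small point of alignment with the paper's conventions: you discard within-class edges of $G$ as irrelevant, which matches Heawood's formulation (an empire may border itself), whereas under a literal reading of the paper's definition of pair-colouring --- and its identification with vertex-colouring of the paired quotient in \cref{FromComplexToPairedLink}, where such an edge becomes a loop --- a within-pair edge would be an outright obstruction; so you should either read the defining condition over distinct classes only, as you implicitly do, or add a remark to that effect when stating the lemma for arbitrary 2-pire maps.
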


The \emph{paired quotient} of a paired graph $(G,\pi)$ is the graph $G/\pi$ obtained from~$G$ by identifying every two vertices that are paired by~$\pi$, keeping all edges.

\begin{lemma}\label{FromComplexToPairedLink}
Let $C$ be a 2-complex, and let $\pi$ denote the default pairing of the link graph~$L(C)$.
The following numbers are equal:
\begin{enumerate}[label={\textnormal{(\roman*)}}]
    \item the edge-chromatic number of the 2-complex~$C$,
    \item the pair-chromatic number of the link graph~$L(C)$, and
    \item the vertex-chromatic number of the paired quotient~$L(C)/\pi$.\qed
\end{enumerate}
\end{lemma}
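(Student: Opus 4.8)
The plan is to prove both equalities by unwinding the definitions, observing that (i), (ii) and (iii) are three reformulations of colourings of one and the same set. Write $G := L(C)$. The crucial point is that the default pairing $\pi$ yields two canonical bijections at once: each pair in $\pi$ consists of the two third-edges $[0,\tfrac13]_e$ and $[\tfrac23,1]_e$ contained in a single topological edge $[0,1]_e$, and so corresponds to the edge $e$ of $C$; and by the definition of the paired quotient, the pairs in $\pi$ are precisely the vertices of $G/\pi$. So the edges of $C$, the pairs of $\pi$, and the vertices of $G/\pi$ are in natural one-to-one correspondence, and the goal is to show that for every $k$ the 2-complex $C$ is $k$-edge-colourable if and only if $(G,\pi)$ is $k$-pair-colourable if and only if $G/\pi$ is $k$-colourable; the three chromatic numbers are then equal by definition.

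First I would establish (i) $=$ (ii). Send a colouring $c$ of the edges of $C$ to the colouring $c^\pi$ of the pairs in $\pi$ given by $c^\pi\bigl(\{[0,\tfrac13]_e,\,[\tfrac23,1]_e\}\bigr) := c(e)$; this is a bijection between the colourings of the edge set of $C$ and the colourings of $\pi$ that use a prescribed colour set. By definition, $c$ induces the vertex-colouring of $G$ assigning each vertex $I\se[0,1]_e$ the colour $c(e)$, and this is exactly the vertex-colouring obtained from $c^\pi$ by giving each vertex the colour of its pair. Hence $c$ is an edge-colouring of $C$ if and only if this induced vertex-colouring is a pair-colouring of $(G,\pi)$, which holds if and only if $c^\pi$ is a pair-colouring of $(G,\pi)$. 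So $c\mapsto c^\pi$ restricts to a bijection between the edge-colourings of $C$ and the pair-colourings of $(G,\pi)$ using the prescribed colours, which gives (i) $=$ (ii).

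Next I would establish (ii) $=$ (iii). A colouring $c$ of the pairs in $\pi$, read through the bijection above as a vertex-colouring of $G/\pi$, is a pair-colouring of $(G,\pi)$ precisely when $c(p)\neq c(q)$ for every edge of $G$ joining a vertex of $p$ to a vertex of $q$. Since $G/\pi$ arises from $G$ by identifying the two vertices of each pair while keeping all edges, such an edge of $G$ becomes an edge of $G/\pi$ between the vertices $p$ and $q$ (a loop when $p=q$), and conversely every edge of $G/\pi$ arises in this way. So the pair-colouring condition for $(G,\pi)$ says exactly that $c$ is a proper colouring of $G/\pi$, and the two chromatic numbers coincide.

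I do not expect a genuine obstacle here: the argument is pure bookkeeping. The one point worth flagging is the degenerate case in which some 2-cell's boundary walk traverses both third-edges of a single topological edge $e$ consecutively at a vertex; then $G$ has an edge inside a pair, equivalently $G/\pi$ has a loop, and $C$ has no edge-colouring at all, so one should check that the three equalities still hold because all three quantities are then infinite. This situation cannot arise when $C$ is simplicial, since a closed walk that goes once around a triangle never traverses two third-edges of the same edge in succession.
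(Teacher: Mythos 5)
Your argument is correct and is precisely the definitional bookkeeping the paper has in mind: the lemma is stated with an immediate \qed and no written proof, since edges of $C$, pairs of $\pi$, and vertices of $L(C)/\pi$ are in canonical bijection and the three colouring conditions translate into one another verbatim. Your extra remark about the degenerate case (an edge of $L(C)$ inside a pair, equivalently a loop in $L(C)/\pi$, making all three quantities infinite) is a sensible sanity check and does not affect the equivalence.
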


\begin{proof}[Proof of \cref{main} \ref{main:sufficient}]
We combine \cref{MbeddableGives2pire} with \cref{Heawood2pire} and \cref{FromComplexToPairedLink}.
\end{proof}

\section{Proof of \texorpdfstring{\ref{main:necessary}}{(2)}}

\begin{lemma}\cite{heawood}\label{2pireMinimum}
There exists a 2-pire map whose pair-chromatic number is equal to~12.
\end{lemma}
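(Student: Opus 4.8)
The plan is to exhibit, by hand, a planar graph $G$ together with a pairing $\pi$ of $V(G)$ into exactly twelve classes such that every two distinct classes $p\neq q$ of $\pi$ are joined by an edge of $G$; equivalently, the paired quotient $G/\pi$ has $K_{12}$ as a subgraph. Such a $(G,\pi)$ is a 2-pire map by definition, and in any pair-colouring the twelve classes must all receive distinct colours, so its pair-chromatic number is at least $12$. As \cref{Heawood2pire} supplies the matching upper bound, the pair-chromatic number is then exactly $12$, as claimed.

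All the work is in the construction, and a count shows that it has no slack whatsoever. Deleting superfluous parallel edges loses no class-to-class adjacency, so I may take $G$ simple; then $G$ has at most $3\cdot 24-6=66$ edges, while there are exactly $\binom{12}{2}=66$ pairs of classes to be joined, so every valid example is in fact a planar triangulation on $24$ vertices using each pair of classes exactly once and having no edge inside a class, i.e.\ one with $G/\pi=K_{12}$. I would produce such a triangulation along the lines of Heawood's empire problem: view the twelve classes as twelve ``empires'' of two countries each, drawn on the sphere so that all $\binom{12}{2}$ empire--empire adjacencies occur, and read off $G$ as the country-adjacency graph with $\pi$ pairing the two countries of each empire. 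Concretely one lays the $24$ countries out in a ring-with-central-gadget pattern wired so that, using only the available adjacencies, every two empires touch; planarity and the full adjacency list are then a finite verification.

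The hard part will be pinning down that twelve-empire pattern and checking it: there is literally no room to spare --- all $66$ adjacencies must be realised and the map must stay planar --- so the picture is bookkeeping-heavy and the verification is where the effort goes. Everything else is immediate: the lower bound is the one-line remark above and the upper bound is \cref{Heawood2pire}. Since the existence of a $12$-empire two-pire map is exactly the $m=2$ case of Heawood's theorem, I would either reproduce his explicit pattern or import it from the survey~\cite{mgpire}, and then merely record the short argument that turns ``all twelve empires pairwise adjacent'' into ``pair-chromatic number equal to $12$''.
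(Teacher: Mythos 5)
Your proposal is correct and takes essentially the same route as the paper: both rest on exhibiting a 2-pire map on 24 vertices whose paired quotient is $K_{12}$, imported from the empire-problem literature (the paper simply displays Kim Scott's explicit example from the survey \cite{mgpire} and reads off that the pair-chromatic number equals the chromatic number of the quotient, which is 12). Your extra observations (the lower bound from twelve mutually adjacent classes, the upper bound via Heawood's 12-pair-colourability, and the tightness count $3\cdot 24-6=66=\binom{12}{2}$) are sound but not different in substance from the paper's argument.
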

\begin{proof}
For convenience, we have included \cref{fig:k12}, which shows a 2-pire map whose paired quotient is a~$K_{12}$, and whose pair-chromatic number is equal to~12 by \cref{FromComplexToPairedLink}.
\end{proof}

\begin{figure}[ht]
    \centering
    \includegraphics[width=.8\linewidth]{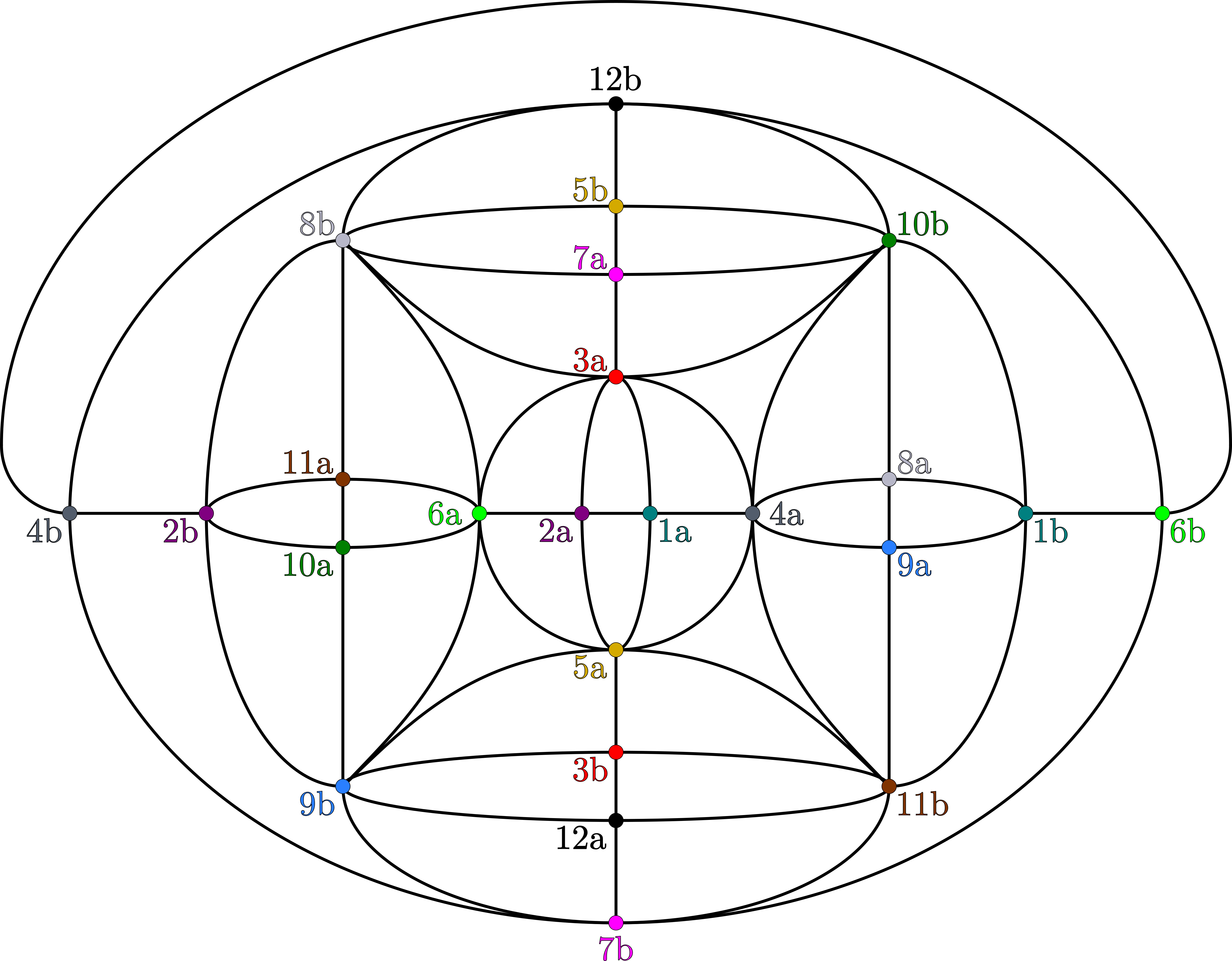}
    \caption{A 2-pire map whose pair-chromatic number is equal to 12. This was found by Kim Scott and published (in slightly different form) in~\cite{mgpire}}
    \label{fig:k12}
\end{figure}

To find a 2-complex $C$ that is not 11-colourable but embeds in~$\R^3$, it suffices by \cref{FromComplexToPairedLink} to construct $C$ so that its link graph with the default pairing contains a spanning copy of the 2-pire map as provided by \cref{2pireMinimum} while simultaneously making sure that $C$ embeds in~$\R^3$.
In the following, we will offer a 2-step construction that achieves just that.
The first step will be \cref{2pireToPunctured} below.

Recall that the degree of a vertex $v$ in a graph~$G$ is the number of edges of~$G$ that are incident with~$v$, counting loops twice.
The pairing $\pi$ of a paired graph $(G,\pi)$ is \emph{degree-faithful} if every two paired vertices have the same degree in~$G$.

We define \emph{punctured 2-complexes} as follows.
Let $G$ be a 1-complex, and let $F_i$ ($i\in I$) be pairwise disjoint copies of the closed strip $\Sp^1\times [0,1]$.
For each $i$, let $F_i^\ast$ denote the subspace of $F_i$ that corresponds to the circle $\Sp^1\times\{0\}$, and fix a continuous \emph{gluing map} $\varphi_i\colon F_i^*\to G$.
As for 2-complexes, we require the maps $\varphi_i$ to follow closed walks in~$G$ at constant speed.
The topological space $D$ obtained from $G$ and the closed strips $F_i$ by identifying $x$ with $\varphi_i(x)$ for all $i\in I$ and $x\in F_i^\ast$ is a \emph{punctured 2-complex}.
The name is motivated by the fact that every punctured 2-complex can be obtained from a genuine 2-complex by `puncturing' every 2-cell.
The \emph{link graph} of a punctured 2-complex is analogous to that of the link graph of a genuine 2-complex.
In fact, the link graph of a 2-complex is invariant under `puncturing'.
The subspaces of~$D$ obtained from the closed strips $F_i$ by gluing $F_i^\ast$ to $G$ are the \emph{punctured 2-cells} of~$D$.

The following definition is a variation of a similar definition in~\cite{WhitneyThree}.
Let $X$ be a set of points in~$\R^3$.
The \emph{shadow} of~$X$ is the set of all points in $\R^3$ that lie on a straight line segment between the origin and some point in~$X$.

\begin{lemma}\label{2pireToPunctured}
For every 2-pire map $G$ with a degree-faithful pairing~$\pi$ there exists a punctured 2-complex $C$ such that the link-graph of $C$ with the default pairing is isomorphic to $(G,\pi)$ and $C$ embeds in~$\R^3$.
\end{lemma}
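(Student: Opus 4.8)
The plan is to build $C$ in two stages: a combinatorial stage that realises $(G,\pi)$ as a link graph with its default pairing, and a geometric stage that embeds the resulting complex in $\R^3$ using the planarity of $G$.

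For the combinatorial stage, take as $1$-skeleton a single vertex $v$ with one loop $\ell_p$ for each pair $p=\{a_p,b_p\}$ of $\pi$, and declare the two third-edges of $\ell_p$ to be $a_p$ and $b_p$. It then suffices to attach punctured $2$-cells whose corners at $v$, read along their defining walks, reproduce the edges of $G$. The corners along one closed walk trace a closed walk in the paired quotient $G/\pi$; and since traversing a loop $\ell_p$ switches between its two third-edges, such a walk must, at every vertex $\bar p$ of $G/\pi$, alternate between edge-ends incident with $a_p$ and edge-ends incident with $b_p$. Conversely, a family of closed walks in $G/\pi$ with this alternation property that collectively uses every edge exactly once turns into a family of punctured $2$-cells on our $1$-skeleton realising $(G,\pi)$ as the link graph: the traversal directions of the loops are forced by the alternation, and one checks they are consistent all the way around each walk. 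So it remains to produce such a decomposition of $E(G/\pi)$. At $\bar p$ there are exactly $\degree_G(a_p)$ edge-ends at $a_p$ and $\degree_G(b_p)$ at $b_p$; by degree-faithfulness these numbers agree, so we may pick a bijection $M_p$ between the two sides. The union of the $M_p$ is a transition system, and following transitions decomposes $E(G/\pi)$ into closed walks with the required alternation. This already yields a punctured $2$-complex $C_0$ with $L(C_0)\cong(G,\pi)$ (paired graphs, default pairing matching $\pi$).

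For the geometric stage, put $v$ at the origin, fix a small ball $B$ around it, and use planarity to draw $G$ on $\partial B$, so that each third-edge becomes a point and each corner an arc of the sphere. The shadow (cone to the origin) of this drawing supplies, inside $B$, the portions of the edges $\ell_p$ near $v$ and the portions of the punctured $2$-cells near $v$ --- triangular ``flaps'' over the corner arcs --- and, the drawing being plane, distinct flaps meet only along a shared prong or at $v$. Outside $B$ there is room in $\R^3$ to route each $\ell_p$ disjointly as an arc between its two prongs, and to attach, along a thin tube about $\ell_p$, the ribbon-pieces of punctured $2$-cells running along it --- one for each traversal of $\ell_p$ by a defining walk --- each joining a flap at $a_p$ to a flap at $b_p$; the free boundary circles then float freely and disjointly. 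The main obstacle sits here: the ribbon-pieces about $\ell_p$ form a ``book'' with spine $\ell_p$, and in $\R^3$ the cyclic order of its pages is preserved along the spine, so the matching they realise between the flaps at $a_p$ and the flaps at $b_p$ must respect (or reverse, depending on how $\ell_p$ is routed) the cyclic orders in which those flaps sit around their points --- that is, the rotations of the plane drawing at $a_p$ and $b_p$. Hence $M_p$ cannot be chosen arbitrarily. This is resolved by noting that there are plenty of bijections compatible with these cyclic orders (the $\degree_G(a_p)$ order-preserving ones, and equally many order-reversing ones), and any such bijection is still a valid transition at $\bar p$; choosing $M_p$ among them keeps the combinatorial construction correct while making every book embeddable. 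Carrying out the two stages with this coordinated choice produces a punctured $2$-complex $C$ embedded in $\R^3$ whose link graph with the default pairing is $(G,\pi)$, as required.
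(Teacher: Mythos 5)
Your proof is correct, and its geometric core is the same as the paper's: a single vertex at the origin with one loop per pair, a plane drawing on a small sphere around it whose cone (the paper's \emph{shadow}) supplies the portions of the loops and of the punctured 2-cells near the vertex, and disjointly routed connections between the paired stars outside the ball, with degree-faithfulness guaranteeing that the two sides of each pair can be matched up. The genuine difference is in how that matching is handled. You fix the combinatorics first: the bijections $M_p$ form a transition system, its closed walks become the boundary walks of the punctured 2-cells, and only then do you embed --- which forces you to confront the book-of-pages constraint along each loop, resolved by restricting $M_p$ to bijections compatible with the rotations of the plane drawing at $a_p$ and $b_p$ (legitimate, since the combinatorial stage accepts any bijection). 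The paper never fixes the walks at all: for each pair $\{u,v\}$ it embeds the whole product $S_u\times[0,1]$ as a single sleeve joining the two drawn stars, without prescribing which homeomorphism $S_u\to S_v$ it induces, and then reads off the punctured 2-cells as the closures of the components of $C\setminus\beta(H)$, which are half-open annuli. Since the link graph only records passages through the unique vertex --- one per drawn edge of $G$, joining the third-edges corresponding to its two ends --- it is independent of the induced matching, so the cyclic-order issue never arises there. Your route makes the combinatorial content (which boundary walks occur) explicit at the price of the extra compatibility argument; the paper's route is slicker but leaves that data implicit. Both arguments are at a comparable level of rigour, each leaving the explicit parametrisation of the sleeves or ribbons to the reader.
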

\begin{proof}
Let $B_r$ denote the closed ball of radius~$r$ around the origin in~$\R^3$.
Since $G$ is planar, there is an embedding $\alpha$ of $G$ (viewed as a 1-complex) in the boundary of the unit ball~$B_1$.

Next, we construct a graph $H$ together with an embedding $\beta$ of $H$ in~$\R^3$, as follows.
The graph $H$ has only one vertex~$h$, which $\beta$ maps to the origin.
For every pair $p$ of vertices $u,v$ in the pairing $\pi$ of the 2-pire map~$G$, we add a loop $e_p$ to $H$ with end~$h$ and let $\beta$ map the interior of $e_p$ into $\R^3-h$ so that the intersection of $\beta(e_p)$ with $B_1$ is equal to the shadow of $\{\alpha(u),\alpha(v)\}$.
It is not hard to make sure that the images of distinct loops under $\beta$ do not intersect except in~$h$, for example as follows.
We enumerate the pairs in $\pi$ as $p_1,\ldots,p_n$.
Then we let $\beta$ map $e_p$ for $p=p_i=\{u,v\}$ to the union of the following three subspaces of~$\R^3$: the two straight line segments that link the origin to $\partial B_{i+1}$ and pass through $\alpha(u)$ and $\alpha(v)$, respectively, plus one of the obvious arcs that links $\alpha(u)$ and $\alpha(v)$ in the boundary~$\partial B_{i+1}$.

\begin{figure}[ht]
    \centering
    \includegraphics[height=18\baselineskip]{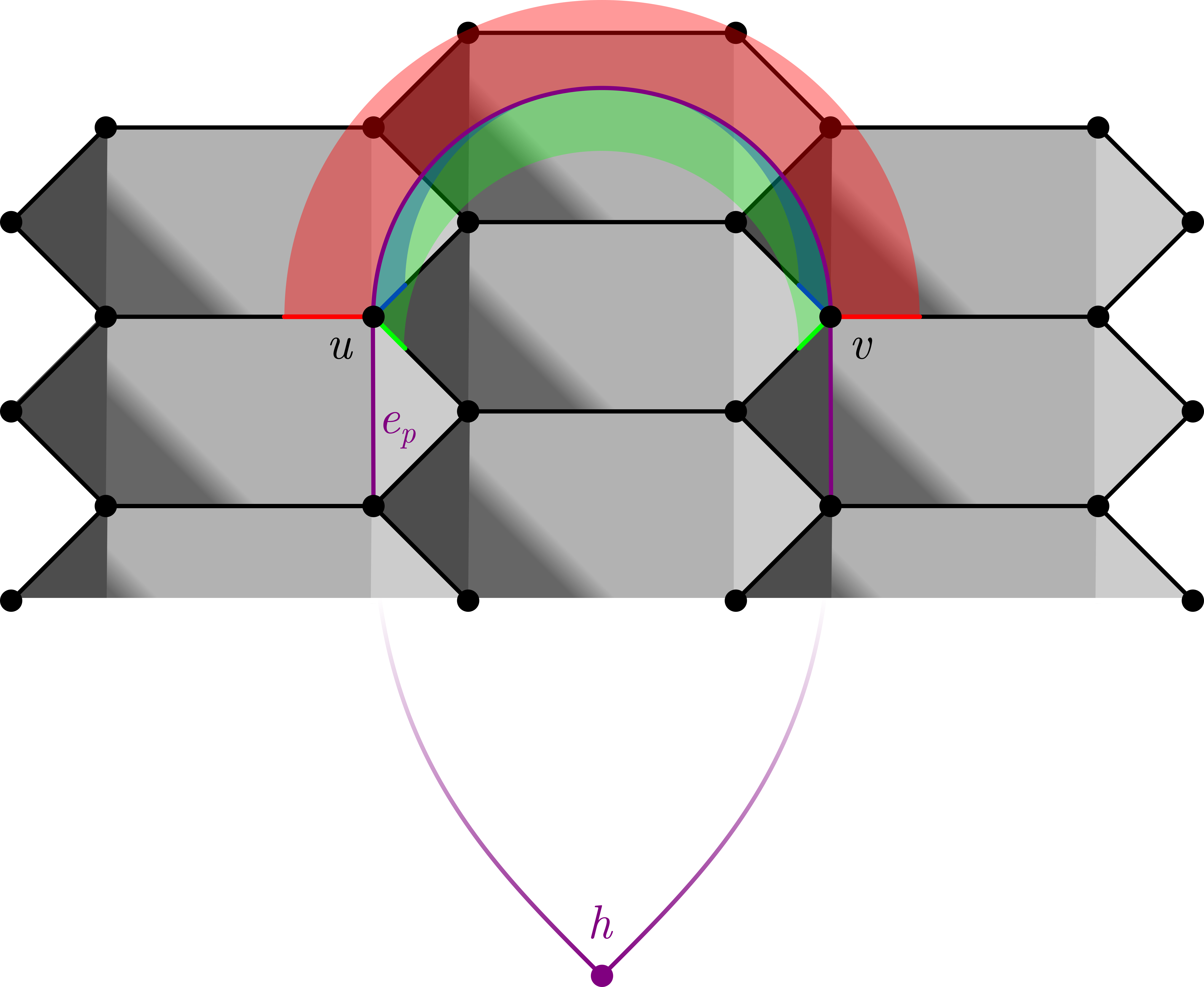}
    \caption{The image of $\iota_p$ for $p=\{u,v\}$. The black graph is $\alpha(G)$. The shadow of $\alpha(G)$ is indicated by the grey shaded areas.}
    \label{fig:attachment}
\end{figure}

The graph~$H$, viewed as a 1-complex, will be the 1-skeleton of the punctured 2-complex~$C$, which we construct next.
For every vertex $v$ of~$G$, let $S_v$ denote the image under $\alpha$ of the union of all third-edges of~$G$ that contain~$v$.
Note that the subspaces $S_v$ are pairwise disjoint, and that $S_u$ is homeomorphic to $S_v$ by a homeomorphism mapping $\alpha(u)$ to~$\alpha(v)$ for all pairs $\{u,v\}= p$ since $\pi$ is degree-faithful.
For each pair $p=\{u,v\}$ in~$\pi$, we informally link up $S_u$ and $S_v$ in $\R^3$ minus the interior of~$B_1$ by embedding the space $S_u\times [0,1]\cong S_v\times [0,1]$ so that this follows the topological path~$\beta(e_p)$, as shown in \cref{fig:attachment}.
More precisely, we find an embedding $\iota_p$ of $S_u\times [0,1]$ in $\R^3$ such that
\begin{itemize}
    \item $\iota_p$ maps $S_u\times \{0\}$ to~$S_u$ and $S_u\times \{1\}$ to~$S_v$;
    \item $\iota_p$ maps $\alpha(u)\times [0,1]$ to~$\beta(e_p)\setminus\mathring{B}_1$; and
    \item the image of $\iota_p$ avoids $B_1\sm (S_u\cup S_v)$.
\end{itemize}
We can greedily find the embeddings $\iota_p$ for all pairs $p\in\pi$ so that their images are pairwise disjoint: For example, if we construct $\beta$ using the balls of distinct radii as outlined above, we could even write down an explicit description of~$\iota_p$, which we do not as it would be tremendously tedious, but it is possible.

Let $C$ be the topological space obtained from the shadow of~$\alpha(G)$ by adding the images of the embeddings $\iota_p$ for all pairs~$p\in\pi$.
The construction of $C$ ensures that all connected components of $C\sm\beta(H)$ are homeomorphic to $\Sp^1\times (0,1]$.
Hence $C$ is a punctured 2-complex with 1-skeleton~$\beta(H)$.
By construction, the link graph of $C$ with its default pairing is homeomorphic to $G$ with the pairing~$\pi$.
\end{proof}

\begin{proof}[Proof of \cref{main}~\cref{main:necessary}]
By \cref{2pireMinimum}, there exists a 2-pire map $G$ with pairing~$\pi$ such that the pair-chromatic number of $G$ with regard to~$\pi$ is equal to~12.
For every edge of $G$ we add an edge in parallel, to make sure that all vertices of $G$ have even degree, which then allows us to add loops to $G$ so that $\pi$ becomes degree-faithful.

By \cref{2pireToPunctured}, there exists a punctured 2-complex $C$ as a subspace of $\R^3$ such that the link-graph of $C$ with the default pairing is isomorphic to $G$ with the pairing~$\pi$.
Let $F_1,\ldots,F_n$ be the punctured 2-cells of~$C$ and let $\varphi_1,\ldots,\varphi_n$ be the corresponding gluing maps.

For each $i=1,\ldots,n$ we do the following. 
Let $W_i$ denote the closed walk in~$H$ that $\varphi_i$ traverses at constant speed.
Let $U_i$ denote the smallest initial segment of~$W_i$ that uses an edge.
We define $W_i'$ to be the closed walk $W_i'=W_i U_i U_i^{-}W_i^{-}$, where $W^{-}$ denotes the reverse of a walk~$W$ and writing the walks in sequence means concatenation.

\begin{figure}[ht]
    \centering
    \includegraphics[width=.8\textwidth]{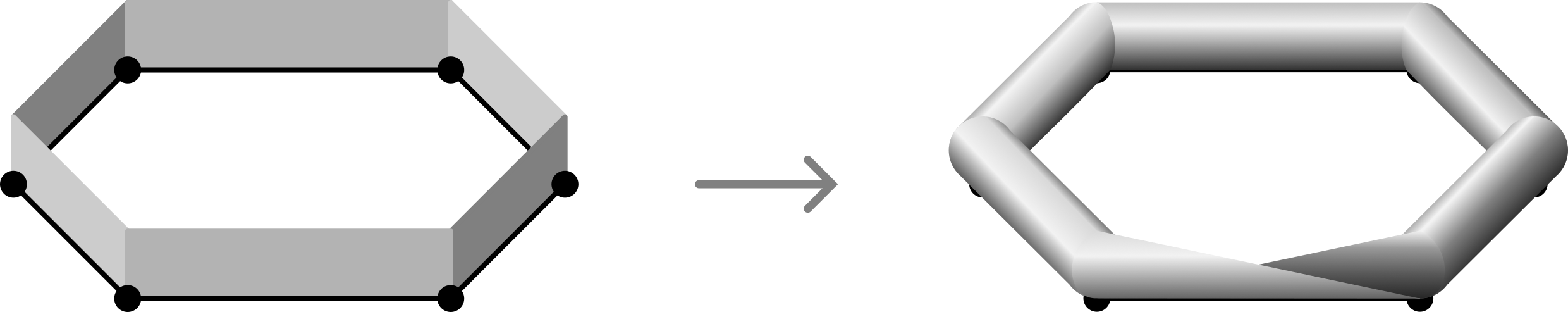}
    \caption{The replacement step}
    \label{fig:replacement}
\end{figure}

We obtain the 2-complex $C'$ from $C$ by replacing each punctured 2-cell $F_i$ with a genuine 2-cell $F_i'$ whose boundary we glue along $W_i'$.
By following $W_i'$ and working in close proximity to the punctured 2-cell $F_i\se\R^3$, we can embed the interiors of the $F_i'$ in~$\R^3$ as depicted in \cref{fig:replacement} so that we obtain an embedding of~$C'$ in~$\R^3$.
\end{proof}

\noindent\textbf{Acknowledgements.}
We are grateful to Noga Alon, Łukasz Bożyk and Michał Pilipczuk for drawing our attention to the construction of a 12-chromatic 2-pire map. We thank Johannes Carmesin for suggesting the problems in this note to us.
We thank Tsvetomir Mihaylov for interesting discussions~\cite{tsvetomir}.

\bibliographystyle{amsplain}
\bibliography{References}

\end{document}